\newtheorem{thm}{Theorem}[section]
\newtheorem{lem}[thm]{Lemma}
\newtheorem{prop}[thm]{Proposition}
\newtheorem{defn}[thm]{Definition}
\numberwithin{equation}{section}
\def\pn{\par\noindent}
\newcommand{\be}{\begin{equation}}
\newcommand{\ee}{\end{equation}}
\newcommand{\ben}{\begin{enumerate}}
\newcommand{\een}{\end{enumerate}}
\newcommand{\beq}{\begin{eqnarray}}
\newcommand{\eeq}{\end{eqnarray}}
\newcommand{\beqn}{\begin{eqnarray*}}
\newcommand{\eeqn}{\end{eqnarray*}}
\newcommand{\pa}{\partial}
\newcommand{\bpf}{\begin{proof}}
\newcommand{\epf}{\end{proof}}
\newcommand{\bl}{\begin{lem}}
\newcommand{\el}{\end{lem}}
\newcommand{\bp}{\begin{prop}}
\newcommand{\ep}{\end{prop}}
\newcommand{\bd}{\begin{defn}}
\newcommand{\ed}{\end{defn}}
\newcommand{\bt}{\begin{thm}}
\newcommand{\et}{\end{thm}}
\def\nn{\nonumber}
\begin{document}

\leftline{ \scriptsize \it Bulletin of the Iranian Mathematical
Society  Vol. {\bf\rm XX} No. X {\rm(}201X{\rm)}, pp XX-XX.}

\vspace{1.3 cm}

\title{On quasi-Eienstein Finsler spaces }
\author{Behroz Bidabad$^*$ and Mohamad Yarahmadi}

\thanks{{\scriptsize
\hskip -0.4 true cm MSC(2010): Primary: 53C60; Secondary: 53C44,
35C08.
\newline Keywords: Finsler space; quasi-Einstein; Ricci flow; Ricci soliton.\\
$*$Corresponding author
\newline\indent{\scriptsize $\copyright$ 2012 Iranian Mathematical
Society}}}

\maketitle

\begin{center}
\end{center}

\begin{abstract}
 The notion of quasi-Einstein metric in physics is equivalent to the notion of Ricci soliton in Riemannian spaces. Quasi-Einstein metrics serve also as solution to the Ricci flow equation. Here, the Riemannian metric is replaced by a Hessian matrix derived from a Finsler structure and a quasi-Einstein Finsler metric is defined. In  compact case, it is proved that the quasi-Einstein metrics are solution to the Finslerian Ricci flow and conversely, certain form of solutions to the Finslerian Ricci flow are quasi-Einstein  Finsler metrics.
\end{abstract}

\vskip 0.2 true cm


\pagestyle{myheadings}
\markboth{\rightline {\scriptsize  Bidabad and Yarahmadi}}
         {\leftline{\scriptsize On quasi-Eienstein Finsler spaces}}

\bigskip
\bigskip


\section{\bf Introduction}
\vskip 0.4 true cm
The Ricci flow in Riemannian geometry was introduced by R. S. Hamilton in 1982, cf. \cite{Ha}, and since then
has been extensively studied thanks to its applications in geometry, physics and different branches of real world problems. Theoretical physicists have also been looking into the equation of quasi-Einstein metrics in relation with string theory.
Ricci flow is a process that deforms the metric of a Riemannian manifold in a way formally analogous to the diffusion of heat.
G. Perelman used Ricci flow to prove the  Poincar\'{e} conjecture.
Quasi-Einstein metrics or Ricci solitons are considered as a solution to the Ricci flow equation and are subject of great interest in geometry and physics.
 Let $(M,g)$ be  a Riemannian manifold, a triple $(M,g,X)$ is said to be a \emph{quasi-Einstein metric} or \emph{Ricci soliton} if $g$ satisfies  the  equation
 \begin{align}\label{Eq;RST}
{2Rc+\mathcal{L}_{X} g=2\lambda g ,}
\end{align}
where $Rc$ is the Ricci tensor, $X$  a smooth vector field on $M$, $\mathcal{L}_{X}$  the Lie derivative along $X$ and $\lambda$  a real constant.
 If the vector field $X$ is gradient of a potential function $f$, then $(M,g,X)$ is said to be \emph{ gradient} and  (\ref{Eq;RST}) takes the familiar form
\begin{equation}
{Rc+\nabla\nabla f=\lambda g.}
\end{equation}
Perelman has proved that  on a compact Riemannian  manifold  every  Ricci soliton is  gradient, cf.  \cite{pre}.
Moreover, on a compact Riemannian manifold a quasi-Einstein metric is a special solution to the Ricci flow equation defined by
\be\label{E;Ricci flow}
\frac{\partial}{\partial t} g(t)=-2Rc, \quad {g(t=0):=g_{_0}}.
\ee
A quasi-Einstein metric is considered as special solution to the Ricci flow  in Riemannian geometry.

Some recent work has focused on the natural question of extending this notion to the Finsler geometry as a natural generalization of Riemannian geometry.

In this work, the Akbar-Zadeh's Ricci tensor is used to  determine  the notion of quasi-Einstein  Finsler metric and it is shown that in the case of compact manifolds, it is a solution to the Ricci flow equation used by D. Bao cf. \cite{bao} in Finsler geometry and vice versa. More precisely,  it is proved that if there is a quasi-Einstein  Finsler metric on a compact manifold then there exists a solution to the Ricci flow equation. Conversely, certain form of solutions to the Ricci flow are quasi-Einstein  Finsler metrics.

\section{\bf {\bf \em{\bf Preliminaries and notations}}}
\vskip 0.4 true cm

Let $M$ be a real n-dimensional manifold differentiable. We denote by $TM$ its tangent bundle and by $\pi :TM_{0} \longrightarrow M$,  fiber bundle of non zero tangent vectors.
A \emph{Finsler structure} on $M$ is a function $F:TM\longrightarrow [0,\infty)$, with the following properties:\\
I. Regularity: $F$ is $C^{\infty}$ on the entire slit tangent bundle $TM_{0}=TM\backslash 0$.\\
II. Positive homogeneity: $F(x,\lambda y)=\lambda F(x,y)$ for all $\lambda > 0$.\\
III. Strong convexity: The $n\times n$ Hessian matrix $(g_{ij})_F=([\frac{1}{2} F^{2}]_{y^{i} y^{j}})$ is positive definite at every point of $TM_{0}$.
A \emph{Finsler manifold} $(M,F)$ is a pair consisting of a differentiable manifold $M$ and a Finsler structure $F$.
The formal Christoffel symbols of  second kind and spray coefficients are respectively denoted here by
\begin{align}\label{chri}
(\gamma^i_{jk})_{_F}:=g^{is}\frac{1}{2}\big(\frac{\partial g_{sj}}{\partial x^k}-\frac{\partial g_{jk}}{\partial x^s}+\frac{\partial g_{ks}}{\partial x^j}\big),
\end{align}
 where $g_{ij}(x,y)=[\frac{1}{2} F^2]_{y^{i}y^{j}}$, and
 \begin{align}\label{E,G}
 G^i_{_F}&:=\frac{1}{2}(\gamma^i_{jk})_{_F}y^j y^k.
\end{align}
We consider also the \emph{reduced curvature tensor} $R^i_k$ which is expressed entirely in terms of the $x$ and $y$ derivatives of spray coefficients $G^i_{_F}$.
 \begin{align}\label{E,Ricci scalar}
 (R^{i}_{k})_{_F}:=\frac{1}{F^2}\big(2\frac{\partial G^i}{\partial x^k}-\frac{\partial^2 G^i}{\partial x^j \partial y^k}y^j +2G^j\frac{\partial^2 G^i}{\partial y^j \partial y^k} - \frac{\partial G^i}{\partial y^j}\frac{\partial G^j}{\partial y^k}\big).
\end{align}
In the general Finslerian setting, one of the remarkable definitions of Ricci tensors is  introduced by H. Akbar-Zadeh \cite{AZ} as follows.
\begin{equation*}
{Ric_{jk} :=[\frac{1}{2} F^{2} Ric]_{y^{j} y^{k}}},
\end{equation*}
where
$
Ric=R^{i}_{i}
$
and  $R^i_k$ is defined by (\ref{E,Ricci scalar}). One of the advantages of the Ricci  quantity defined here  is its independence to the choice of the Cartan, Berwald or Chern(Rund) connections. Based on the Akbar-Zadeh's Ricci tensor, in analogy with the equation (\ref{E;Ricci flow})
, D. Bao has considered, the following natural extension of \emph{Ricci flow } in Finsler geometry, cf.  \cite{bao},
\begin{align*}
\frac{\partial}{\partial t} g_{jk}=-2Ric_{jk}, \quad {g(t=0):=g_{_0}}.
\end{align*}
This equation is equivalent to the following differential equation
\begin{align}\label{E;Ricci}
{\frac{\partial}{\partial t}(\log F(t))=-Ric,}\quad {F(t=0):=F_{_0}},
\end{align}
where, $F_{_0}$ is the initial Finsler structure.
Let $V=v^{i}(x) \frac{\partial}{\partial x_i}$  be a vector field on $M$.  If \{$\varphi_t$\} is the local one-parameter group of $M$ generated by $V$, then it induces an infinitesimal point transformation on $M$ defined by $\varphi^\star_t (x^{i}):= \bar{x}^{i}$, where $\bar{x}^i=x^{i}+v^{i}(x) t$. This is naturally extended to the point transformation $\tilde{\varphi_t}$ on the tangent bundle $TM$ defined by $\tilde{\varphi_t}^{\star} := (\bar{x}^{i},\bar{y}^{i})$, where
\begin{align} \label{E;T}
\bar{x}^{i}=x^{i}+v^{i}(x) t, \quad \bar{y}^{i}=y^{i}+\frac{\partial v^{i}}{\partial x^{m}}  y^{m} t.
\end{align}
It can be shown that, $\{\tilde{\varphi_t}\}$ induces a vector field $\hat{V}=v^{i}(x) \frac{\partial}{\partial x^{i}}+y^{j} \frac{\partial v^{i}}{\partial x^{j}}  \frac{\partial}{\partial y^{i}}$ on $TM$ called the complete lift of $V$. The
 one-parameter group associated to  the complete lift $\hat{V}$ is given by  $\tilde{\varphi_t}=y^i \frac{\partial \varphi_t}{\partial x^i}$.

The \emph{Lie derivative} of an arbitrary geometric object $\Upsilon^{I}(x,y)$ on $TM$, where $I$ is a mixed multi index, with respect to the complete lift $\hat{V}$ of a vector field $V$ on $M$, is defined by
\begin{align} \label{Df;LieDer}
{\mathcal{L}_{\hat{V}} \Upsilon^I=\lim_{t\rightarrow 0}\frac{\tilde{\varphi_t}^{\star}(\Upsilon^{I})-\Upsilon^{I}}{t}}=\frac{d}{dt}\tilde{\varphi_t}^{\star}(\Upsilon^{I}),
\end{align}
where $\tilde{\varphi_t}^\star(\Upsilon^{I})$ is the deformation of $\Upsilon^{I}(x,y)$ under the extended point transformation (\ref{E;T}). Whenever the geometric object $\Upsilon^{I}$ is a tensor field, $\tilde{\varphi_t}^\star(\Upsilon^{I})$ coincides with the classical notation of pullback of $\Upsilon^{I}(x,y)$, cf. \cite{Johari}.

\section{\bf {\bf \em{\bf Quasi-Einstein Finsler metrics}}}
\vskip 0.4 true cm

Let $(M,F_0)$ be a Finsler manifold and $V=v^{i}(x) \frac{\partial}{\partial x^{i}}$
 a vector field on $M$. We call the triple $(M,F_0,V)$ a Finslerian \emph{quasi-Einstein} or a \emph{Ricci soliton} if ${g_{jk}}$ the Hessian related to the Finsler structure $F_0$ satisfies
 \begin{align}\label{Eq;DefRicciSoliton+}
2Ric_{jk}+\mathcal{L}_{\hat{V}}  {g_{jk}}=2 \lambda g_{jk},
\end{align}
where,   $\hat V$ is complete lift of $V$ and  $\lambda \in {\mathbb{R}}$. A  moment's thought shows that this equation leads to
 \begin{align}\label{Eq;DefRicciSoliton}
2F_{0}^{2}Ric_{_{F_0}}+\mathcal{L}_{\hat{V}} { F_{0}^{2}}=2 \lambda F_{0}^{2}.
\end{align}


\begin{lem}
Let $M$ be a  differentiable manifold, $F_0$ a Finsler structure and $\varphi_t$  a family of diffeomorphisms on $M$.  Then the pull back of extended point transformation  $\tilde{\varphi_t}^\star(F_0): TM\longrightarrow [0,\infty)$ is also  a Finsler structure on $M$.
\end{lem}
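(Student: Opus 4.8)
The plan is to verify directly that $\tilde\varphi_t^\star(F_0)$ satisfies the three defining axioms of a Finsler structure: regularity, positive homogeneity in the fiber variable, and strong convexity of the associated Hessian. Write $\tilde F := \tilde\varphi_t^\star(F_0)$, so that by the definition of the pullback under the extended point transformation (\ref{E;T}) we have $\tilde F(x,y) = F_0(\bar x, \bar y)$ with $\bar x^i = x^i + v^i(x)t$ and $\bar y^i = y^i + \frac{\partial v^i}{\partial x^m} y^m t$. The key structural observation, which I would isolate first, is that for each fixed (small) $t$ the map $(x,y)\mapsto(\bar x,\bar y)$ is a fiberwise-linear diffeomorphism of $TM_0$: it is the tangent-bundle lift $d\varphi_t$ of the diffeomorphism $\varphi_t$ of $M$, hence for fixed $x$ the assignment $y\mapsto\bar y$ is the linear isomorphism $d(\varphi_t)_x$. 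Everything else follows by pulling the three axioms back along this map.

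First I would check regularity: $\varphi_t$ is a diffeomorphism of $M$, so $d\varphi_t$ is a diffeomorphism of $TM$ carrying $TM_0$ to $TM_0$ (the zero section goes to the zero section because $d(\varphi_t)_x$ is linear), and since $F_0$ is $C^\infty$ on $TM_0$, the composition $\tilde F = F_0\circ d\varphi_t$ is $C^\infty$ on $TM_0$. Second, positive homogeneity: for $\lambda>0$, replacing $y$ by $\lambda y$ replaces $\bar y$ by $\lambda\bar y$ (again because $y\mapsto\bar y$ is linear) while leaving $\bar x$ unchanged, so $\tilde F(x,\lambda y) = F_0(\bar x,\lambda\bar y) = \lambda F_0(\bar x,\bar y) = \lambda\tilde F(x,y)$, using property II for $F_0$. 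Third, and this is the one genuine computation, strong convexity: I would compute the Hessian $(\tilde g_{ij}) = \big([\tfrac12\tilde F^2]_{y^iy^j}\big)$ via the chain rule. Since $\tfrac12\tilde F^2(x,y) = (\tfrac12 F_0^2)(\bar x,\bar y)$ and $\bar x$ does not depend on $y$ while $\partial\bar y^k/\partial y^i = \delta^k_i + t\,\partial v^k/\partial x^i =: A^k_i$, two applications of the chain rule give
\begin{align*}
\tilde g_{ij}(x,y) = A^k_i\, A^l_j\, (g_0)_{kl}(\bar x,\bar y),
\end{align*}
i.e. $(\tilde g_{ij}) = A^T (g_0)\, A$ with $A = (A^k_i)$. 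For $t$ small enough $A$ is invertible (it is $I + tB$ with $B = (\partial v^k/\partial x^i)$), so this is a congruence transformation of the positive-definite matrix $(g_0)_{kl}(\bar x,\bar y)$, hence positive definite; thus $\tilde F$ satisfies III.

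The main obstacle — really the only subtlety — is the invertibility of $A = I + t\,(\partial v/\partial x)$, which is what makes $y\mapsto\bar y$ an isomorphism rather than merely a linear map, and which is exactly the condition that $\varphi_t$ be a diffeomorphism (so that its differential is invertible); this is guaranteed by the hypothesis that $\{\varphi_t\}$ is a family of diffeomorphisms, and for the local one-parameter group it holds for $|t|$ sufficiently small. A minor point worth stating carefully is that the pullback here is literally the composition with the bundle map $d\varphi_t$, so that the "geometric object" in (\ref{Df;LieDer}) is being treated as a function on $TM$; once this identification is made, the proof is the three-line verification above. I would also note in passing that the computation $(\tilde g_{ij}) = A^T (g_0)\,A$ is precisely the tensorial transformation law of $g_0$ under the coordinate-type change induced by $\varphi_t$, which is why no homogeneity-degree obstruction arises: $\tfrac12 F^2$ has the right weight for its Hessian to transform as a $(0,2)$-tensor.
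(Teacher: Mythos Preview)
Your proof is correct and follows essentially the same approach as the paper: a direct verification of the three axioms (regularity via composition of smooth maps, homogeneity via fiberwise linearity of $d\varphi_t$, strong convexity via the Hessian). The only cosmetic difference is in the convexity step: the paper differentiates with respect to the transformed fiber coordinates $\tilde y^i$ to get $[\tfrac12(\tilde\varphi_t^\star F_0)^2]_{\tilde y^i\tilde y^j}=\tilde\varphi_t^\star\big([\tfrac12 F_0^2]_{y^iy^j}\big)$, whereas you differentiate with respect to the original $y^i$ and obtain the explicit congruence $\tilde g = A^{T}(g_0)A$; these are two presentations of the same chain-rule computation, and your version makes the role of the invertibility of $A=d(\varphi_t)_x$ more transparent.
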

\begin{proof}
If we put $\tilde{\varphi_t}^{\star}x^i=\tilde x^i$ and $\tilde{\varphi_t}^{\star}y^i=\tilde y^i$. We should show that $\tilde{\varphi_t}^{\star}(F_0):= F_0 \circ \tilde{\varphi_t}:TM_0\longrightarrow [0,\infty)$ satisfies  the three conditions I, II and  III in the definition of Finsler structure.
Clearly we have the regularity condition since  ${F_0}$ and $\tilde{\varphi_t}$ are  $C^{\infty}$ on $TM_{0}$, and  so  is ${F_{0}}\circ{\widetilde{\varphi_t}}$.
\begin{align*}
\tilde{\varphi_t}^{\star}F_0(x,\lambda y)&={F_{0}}(\tilde{\varphi_t}(x,\lambda y))={F_0}(\varphi_t (x),\lambda y^{j} \frac{\partial \varphi_t}{\partial x^{j}})\\
&=\lambda F_{0}(\varphi_t(x),y^{j} \frac{\partial \varphi_t}{\partial x^{j}})=
\lambda F_{0}(\tilde{\varphi_t}(x,y))=\lambda \tilde{\varphi_t}^{\star}(F_0)(x,y).
\end{align*}
Thus the positive homogeneity is satisfied. For
strong convexity we have
\begin{align*}
[\frac{1}{2} \left(\tilde{\varphi_t}^\star F_0\right)^2]_{\tilde y^{i} \tilde y^{j}}&=\frac{1}{2}\frac{\partial^2 \big( \left(\tilde{\varphi_t}^\star F_0 \right)^2 \big)}{\partial \tilde y^i \partial \tilde y^j}=\frac{1}{2}\frac{\partial^2 \big( (F_0\circ\tilde{\varphi_t})(F_0\circ\tilde{\varphi_t})\big)}{\partial \tilde y^i \partial \tilde y^j}\\
&=\frac{1}{2}\frac{\partial^2 \big(F_0^2\circ\tilde{\varphi_t}  \big)}{\partial \tilde y^i \partial \tilde y^j}
=\frac{1}{2} {\frac{\partial ^2 (\tilde{\varphi_t}^{\star}F_0^2)}{\partial \tilde y^{i} \partial \tilde y^{j}}}.
\end{align*}
One can easily check that
\begin{align*}
 {\frac{\partial  (\tilde{\varphi_t}^{\star}F_0^2)}{\partial \tilde y^{i}}}= \tilde{\varphi_t}^\star( \frac{\partial F_0^2}{\partial y^i }),
\end{align*}
from which
\begin{align*}
[\frac{1}{2} \left(\tilde{\varphi_t}^\star F_0\right)^2]_{\tilde y^{i} \tilde y^{j}}=\frac{1}{2} {\frac{\partial ^2 (\tilde{\varphi_t}^{\star}F_0^2)}{\partial \tilde y^{i} \partial \tilde y^{j}}}=\tilde{\varphi_t}^\star(\frac{1}{2} \frac{\partial^2 F_0^2}{\partial y^i \partial y^j})=\tilde{\varphi_t}^\star([\frac{1}{2} F_0^2]_{y^i y^j}).
\end{align*}
Using the facts that  $[\frac{1}{2} F_0^2]_{y^{i}y^{j}}$ is positive definite and $\tilde{\varphi_t}^{\star}$ is a diffeomorphism, $\tilde{\varphi_t}^{\star}([\frac{1}{2} F_0^2]_{y^{i}y^{j}})$ is also positive definite and  hence $\tilde{\varphi_t}^{\star}(F_0)$ satisfies   III. This completes the proof of Lemma 1.
\end{proof}
\begin{lem}
Let $\varphi_t$ be  a family of diffeomorphisms on  $M$ and $(\gamma^i_{jk})_{_{F_0}}$ and  $G^i_{_{F_0}}$ the Christoffel and spray coefficients related to the Finsler structure $F_0$, respectively.  Then we have
\begin{align}\label{E;ExtendedTransChris}
&\tilde{\varphi_t}^\star\big((\gamma^i_{jk})_{_{F_0}}\big)=
(\gamma^i_{jk})_{\tilde{\varphi_t}^\star(F_0)},\\\label{E;ExtendedTransSpray}
&\tilde{\varphi_t}^\star\big(G^i_{_{F_0}}\big)=G^i_{\tilde{\varphi_t}^\star(F_0)}.
\end{align}
\end{lem}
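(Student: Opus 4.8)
The plan is to reduce both identities to Lemma~1 and then propagate the chain rule through the defining formulas (\ref{chri}) and (\ref{E,G}). Write $\varphi_t$ in coordinates as $\tilde x^i=\varphi_t^i(x)$ and put $B^i_j(x):=\partial\varphi_t^i/\partial x^j$, so that the extended transformation $\tilde{\varphi_t}$ of Lemma~1 sends $(x,y)$ to $\big(\varphi_t(x),B(x)y\big)$, i.e. $\tilde y^i=B^i_j(x)y^j$; here the matrix $B^i_j$ is invertible, with inverse $\bar B^i_j$, since $\varphi_t$ is a diffeomorphism. Read off in coordinates, the computation in the proof of Lemma~1 says that the Hessian of the pulled back structure is the tensorial pullback of the Hessian of $F_0$,
\[
(g_{ij})_{\tilde{\varphi_t}^\star(F_0)}(x,y)=B^a_i(x)\,B^b_j(x)\,(g_{ab})_{_{F_0}}\big(\varphi_t(x),B(x)y\big),
\]
and correspondingly $(g^{ij})_{\tilde{\varphi_t}^\star(F_0)}=\bar B^i_a\,\bar B^j_b\,(g^{ab})_{_{F_0}}\circ\tilde{\varphi_t}$.

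Next I would substitute these expressions into (\ref{chri}) written for $\tilde{\varphi_t}^\star(F_0)$ and differentiate. Because the $F_0$-quantities are evaluated at $\big(\varphi_t(x),B(x)y\big)$, the operator $\partial/\partial x^k$ produces three kinds of terms: (i) those where it lands on the point argument $\varphi_t(x)$, which reproduce the $x$-derivatives of $(g_{ab})_{_{F_0}}$ contracted with $B^c_k$; (ii) those where it lands on the vector argument $B(x)y$, which produce the $y$-derivatives of $(g_{ab})_{_{F_0}}$ contracted with $(\partial B^c_d/\partial x^k)\,y^d$; and (iii) those where it lands on the Jacobian factors $B^a_i,B^b_j$, which produce second derivatives $\partial^2\varphi_t^a/\partial x^j\partial x^k$. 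Feeding these into the cyclic combination of (\ref{chri}) and contracting with the inverse metric: the type-(iii) terms combine, by the symmetry of $g_{ab}$ and of $\partial^2\varphi_t^a/\partial x^j\partial x^k$, into $\bar B^i_a\,\partial^2\varphi_t^a/\partial x^j\partial x^k$; the type-(i) terms reassemble into $\bar B^i_a\,B^b_j\,B^c_k\,(\gamma^a_{bc})_{_{F_0}}\big(\varphi_t(x),B(x)y\big)$; and the type-(ii) terms reassemble into the accompanying $\partial/\partial y$-correction. Their sum is exactly the rule by which $\tilde{\varphi_t}^\star$ acts on the (non-tensorial) object $(\gamma^i_{jk})_{_{F_0}}$, which is (\ref{E;ExtendedTransChris}).

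Identity (\ref{E;ExtendedTransSpray}) then follows by contracting (\ref{E;ExtendedTransChris}) with $y^jy^k$ in (\ref{E,G}) and using $B^j_m(x)\,y^m=\tilde y^j$: the second-derivative term contracts to $\tfrac12\,\bar B^i_a\,(\partial^2\varphi_t^a/\partial x^j\partial x^k)\,y^jy^k$, which is precisely the inhomogeneous part of the transformation law of the spray coefficients, while the remaining terms give $\bar B^i_a\,G^a_{_{F_0}}\big(\varphi_t(x),B(x)y\big)$ plus its $\partial/\partial y$-correction; hence the right side equals $\tilde{\varphi_t}^\star(G^i_{_{F_0}})$. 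Alternatively one can argue conceptually: the geodesic spray $S_F:=y^i\,\partial/\partial x^i-2G^i_F\,\partial/\partial y^i$ is an intrinsic vector field on $TM_0$ determined by $F$, and it is natural under diffeomorphisms because the geodesics of $\tilde{\varphi_t}^\star(F_0)$ are the $\varphi_t$-preimages of those of $F_0$; reading off the $y$-components of $(\tilde{\varphi_t})_\ast S_{F_0}=S_{\tilde{\varphi_t}^\star(F_0)}$ gives (\ref{E;ExtendedTransSpray}).

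The main obstacle is the bookkeeping in the second paragraph, and in particular the check that the type-(ii) terms — which have no Riemannian analogue and come only from the $y$-dependence of $g_{ab}(x,y)$ — do not merely cancel but combine into exactly the extra $\partial/\partial y$-term occurring in the transformation law of the formal Finsler Christoffel symbols. Once it is fixed what ``$\tilde{\varphi_t}^\star$ of a non-tensor'' means (transformation by the appropriate, generally inhomogeneous, rule), both identities are just the naturality of the constructions $F_0\mapsto(\gamma^i_{jk})_{_{F_0}}$ and $F_0\mapsto G^i_{_{F_0}}$ under the diffeomorphism-induced transformation on $TM_0$, which holds because (\ref{chri}) and (\ref{E,G}) are built only from algebraic operations and $x$-partial derivatives applied to the tensorially pulled back metric.
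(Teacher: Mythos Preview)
Your argument is correct, but it takes a genuinely different route from the paper's. The paper's proof is a brief formal manipulation: it simply distributes $\tilde{\varphi_t}^\star$ over the defining formula (\ref{chri}), invoking $\tilde{\varphi_t}^\star(\partial g_{sj}/\partial x^k)=\partial(\tilde{\varphi_t}^\star g_{sj})/\partial\tilde x^k$ with $\tilde x^k:=\tilde{\varphi_t}^\star x^k$, and then recognises the result as $(\gamma^i_{jk})_{\tilde{\varphi_t}^\star(F_0)}$; the spray identity follows in one line by pulling back $y^jy^k$ to $\tilde y^j\tilde y^k$ and citing the first part. In effect the paper treats $\tilde{\varphi_t}^\star$ as an algebra homomorphism that commutes with coordinate differentiation in the transported coordinates, so both identities become two-line naturality statements.

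What you do instead is the explicit coordinate computation: you write out the Jacobian $B^i_j$, identify the tensorial pullback of $g_{ij}$ from Lemma~1, and then track the three families of chain-rule terms, including the inhomogeneous $\partial^2\varphi_t^a/\partial x^j\partial x^k$ piece and the Finsler-specific type-(ii) terms coming from the $y$-dependence of $g_{ab}$. This buys you a genuine verification that the non-tensorial transformation law of $\gamma^i_{jk}$ really matches up, rather than relying on a formal commutation rule whose meaning for non-tensors the paper never spells out; your alternative argument via naturality of the geodesic spray $S_F$ is also absent from the paper and is arguably the cleanest conceptual proof of (\ref{E;ExtendedTransSpray}). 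The cost is the bookkeeping you flag as the main obstacle; the paper's approach is much shorter but correspondingly less explicit about exactly which object $\tilde{\varphi_t}^\star\big((\gamma^i_{jk})_{F_0}\big)$ denotes.
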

\begin{proof}
By definition, we have
\begin{align*}
\tilde{\varphi_t}^\star\big(\gamma^i_{jk})_{_{F_0}}&=
\tilde{\varphi_t}^\star\big(g^{is}\frac{1}{2}(\frac{\partial g_{sj}}{\partial x^k}-\frac{\partial g_{jk}}{\partial x^s}+\frac{\partial g_{ks}}{\partial x^j})\big)\\&=
\tilde{\varphi_t}^\star(g^{is})\tilde{\varphi_t}^\star(\frac{1}{2}(\frac{\partial g_{sj}}{\partial x^k}-\frac{\partial g_{jk}}{\partial x^s}+\frac{\partial g_{ks}}{\partial x^j}))\\
&=\tilde{\varphi_t}^\star(g^{is})\frac{1}{2}\big(\frac{\partial \tilde{\varphi_t}^\star(g_{sj})}{\partial \tilde x^k}-\frac{\partial \tilde{\varphi_t}^\star(g_{jk})}{\partial \tilde x^s}+\frac{\partial \tilde{\varphi_t}^\star(g_{ks})}{\partial \tilde x^j}\big)\\
&=(\gamma^i_{jk})_{\tilde{\varphi_t}^\star(F_0)}.
\end{align*}
Next, we have
\begin{align*}
\tilde{\varphi_t}^\star\big(G^i_{_{F_0}}\big)&=
\tilde{\varphi_t}^\star\big(\frac{1}{2}(\gamma^i_{jk})_{_{F_0}} y^j y^k\big)=
\frac{1}{2}\tilde{\varphi_t}^\star\big((\gamma^i_{jk})_{_{F_0}}\big) \tilde{\varphi_t}^\star y^j \tilde{\varphi_t}^\star y^k \\ &=\frac{1}{2}(\gamma^i_{jk})_{_{\tilde{\varphi_t}^\star(F_0)}}\tilde y^j \tilde y^k=G^i_{\tilde{\varphi_t}^\star(F_0)}.
\end{align*}
which completes the proof of Lemma 2.
\end{proof}
\begin{lem}
Let $\varphi_t$ be  a family of diffeomorphisms on  $M$ and $Ric_{_{F_0}}$ the Ricci scalar related to the Finsler structure $F_0$, then we have
 \begin{align}\label{E;1}
Ric_{_{\mu F_{_{0}}}}=\frac{1}{\mu^2} Ric_{_{F_{_0}}},
\end{align}
 for all $\mu>0$, and
\begin{align}\label{E;2}
\tilde{\varphi_t}^{\star}(Ric_{_{F_0}})=Ric_{_{\tilde{\varphi_t}^{\star}(F_0)}}.
\end{align}
\end{lem}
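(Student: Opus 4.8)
The two identities are of rather different character, so I would treat them separately. For \eqref{E;1}, the plan is to trace through the definition of the Ricci scalar in \eqref{E,Ricci scalar} under the rescaling $F_0 \mapsto \mu F_0$ with $\mu$ a positive constant. Since $\mu$ does not depend on $x$ or $y$, the Christoffel symbols \eqref{chri} are built from $g_{ij}$, which scales by $\mu^2$, while $g^{is}$ scales by $\mu^{-2}$; hence $(\gamma^i_{jk})_{\mu F_0} = (\gamma^i_{jk})_{F_0}$, and consequently by \eqref{E,G} the spray coefficients are also unchanged, $G^i_{\mu F_0} = G^i_{F_0}$. Feeding this into \eqref{E,Ricci scalar}, the bracketed expression $2\frac{\partial G^i}{\partial x^k} - \cdots$ is exactly the same for $\mu F_0$ as for $F_0$, but the prefactor $1/F^2$ becomes $1/(\mu F_0)^2 = \mu^{-2}/F_0^2$. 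Summing over $i=k$ to form $Ric = R^i_i$, I get $Ric_{\mu F_0} = \mu^{-2} Ric_{F_0}$, which is \eqref{E;1}. The only mild care needed is to note explicitly that $\mu$ is constant so it commutes with all the $x$- and $y$-derivatives; otherwise this is a direct computation.

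For \eqref{E;2}, the plan is to combine Lemma 2 with the naturality of pullback under derivatives, which was already established in the course of proving Lemma 2 (the interchange of $\tilde\varphi_t^\star$ with $\partial/\partial\tilde x^k$ and, as in Lemma 1, with $\partial/\partial\tilde y^k$). By Lemma 2 we have $\tilde\varphi_t^\star(G^i_{F_0}) = G^i_{\tilde\varphi_t^\star(F_0)}$. Now apply $\tilde\varphi_t^\star$ to the right-hand side of \eqref{E,Ricci scalar} written for $F_0$: since $\tilde\varphi_t^\star$ is an algebra homomorphism that commutes with $\partial/\partial x^j$ and $\partial/\partial y^j$ (it sends them to $\partial/\partial\tilde x^j$ and $\partial/\partial\tilde y^j$), it carries each term $\frac{\partial G^i}{\partial x^k}$, $\frac{\partial^2 G^i}{\partial x^j\partial y^k}y^j$, $G^j\frac{\partial^2 G^i}{\partial y^j\partial y^k}$, $\frac{\partial G^i}{\partial y^j}\frac{\partial G^j}{\partial y^k}$ to the corresponding term built from $G^i_{\tilde\varphi_t^\star(F_0)}$, and it sends the factor $1/F_0^2$ to $1/(\tilde\varphi_t^\star F_0)^2$. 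Hence $\tilde\varphi_t^\star((R^i_k)_{F_0}) = (R^i_k)_{\tilde\varphi_t^\star(F_0)}$; contracting $i=k$ gives $\tilde\varphi_t^\star(Ric_{F_0}) = Ric_{\tilde\varphi_t^\star(F_0)}$, which is \eqref{E;2}.

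The main obstacle, such as it is, is the bookkeeping for \eqref{E;2}: one must be confident that $\tilde\varphi_t^\star$ genuinely commutes with the coordinate partial derivatives on $TM$, i.e. that $\frac{\partial}{\partial\tilde x^k}\tilde\varphi_t^\star(\cdot) = \tilde\varphi_t^\star(\frac{\partial}{\partial x^k}(\cdot))$ and likewise for $\partial/\partial\tilde y^k$. This is the chain-rule computation of the type already carried out in Lemma 1 for the $\tilde y$-derivatives and invoked in Lemma 2 for the $\tilde x$-derivatives, so I would simply cite those computations rather than repeat them. Once that naturality is granted, both \eqref{E;1} and \eqref{E;2} reduce to formal substitution into \eqref{E,Ricci scalar} followed by the contraction $Ric = R^i_i$, and there is no real analytic difficulty.
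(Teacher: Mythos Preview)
Your proposal is correct and follows essentially the same route as the paper: for \eqref{E;1} you show $(\gamma^i_{jk})_{\mu F_0}=(\gamma^i_{jk})_{F_0}$ and hence $G^i_{\mu F_0}=G^i_{F_0}$, so that only the prefactor $1/F^2$ in \eqref{E,Ricci scalar} picks up the factor $\mu^{-2}$; for \eqref{E;2} you apply $\tilde\varphi_t^\star$ term by term to \eqref{E,Ricci scalar}, invoke the commutation of $\tilde\varphi_t^\star$ with $\partial/\partial x^j$, $\partial/\partial y^j$ already used in Lemmas~1 and~2, and then Lemma~2 itself to identify the result with $(R^i_k)_{\tilde\varphi_t^\star(F_0)}$ before contracting. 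This is exactly the paper's argument.
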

\begin{proof}
The spray coefficient $G^i_{_{F_0}}$ are two-homogeneous, that is  for all $\lambda>0$ we have $G^i_{_{F_0}}(x,\lambda y)=\lambda^2 G^i_{_{F_0}}(x, y)$. On the other hand   for all $\mu>0$,  $G^i_{_{\mu F_0}}= G^i_{_{F_0}}$.
In fact
\begin{align*}
(g_{ij})_{_{\mu F_0}}=\frac{1}{2}\frac{\partial^2\big((\mu F_0)^2\big)}{\partial y^i y^j}=\frac{1}{2}\frac{\partial^2 (\mu^2 F_0^2)}{\partial y^i y^j}=\frac{1}{2}\mu^2 \frac{\partial^2 ( F_0^2)}{\partial y^i y^j}=\mu^2 (g_{ij})_{_{F_0}},
\end{align*}
and hence
\begin{align*}
(\gamma^i_{jk})_{_{\mu F_0}}&=\frac{1}{2}(g^{is})_{_{\mu F_0}}\big(\frac{\partial((g_{sj})_{_{\mu F_0}})}{\partial x^k}-\frac{\partial((g_{jk})_{_{\mu F_0}})}{\partial x^s}+\frac{\partial((g_{ks})_{_{\mu F_0}})}{\partial x^j}\big)\\&=\frac{1}{2}\frac{1}{\mu^2}(g^{is})_{_{F_0}}\big(\frac{\partial(\mu^2(g_{sj})_{_{ F_0}})}{\partial x^k}-\frac{\partial(\mu^2(g_{jk})_{_{F_0}})}{\partial x^s}+\frac{\partial(\mu^2(g_{ks})_{_{F_0}})}{\partial x^j}\big)\\
&=\frac{1}{2}\frac{1}{\mu^2}(g^{is})_{_{F_0}}\mu^2\big(\frac{\partial((g_{sj})_{_{ F_0}})}{\partial x^k}-\frac{\partial((g_{jk})_{_{F_0}})}{\partial x^s}+\frac{\partial((g_{ks})_{_{F_0}})}{\partial x^j}\big)\\
&=\frac{1}{2}(g^{is})_{_{F_0}}\big(\frac{\partial((g_{sj})_{_{ F_0}})}{\partial x^k}-\frac{\partial((g_{jk})_{_{F_0}})}{\partial x^s}+\frac{\partial((g_{ks})_{_{F_0}})}{\partial x^j}\big)=(\gamma^i_{jk})_{_{F_0}},
\end{align*}
implies that
\begin{align}\label{Eq;Gmu}
G^i_{_{\mu F_0}}=\frac{1}{2}(\gamma^i_{jk})_{_{\mu F_0}} y^i y^j=\frac{1}{2}(\gamma^i_{jk})_{_{F_0}} y^i y^j=G^i_{_{F_0}}.
\end{align}
 By means of the definition of Ricci scalar  we obtain
\begin{align}\label{Eq;Riccimu}
(R^i_k)_{_{\mu F_0}}=\frac{1}{\mu^2 F_0^2}\big(2\frac{\partial G^i_{_{\mu F_0}}}{\partial x^k}-\frac{\partial^2G^i_{_{\mu F_0}}}{\partial x^j \partial y^k}y^j +2G^j\frac{\partial^2G^i_{_{\mu F_0}}}{\partial y^j \partial y^k} - \frac{\partial G^i_{_{\mu F_0}}}{\partial y^j}\frac{\partial G^j_{_{\mu F_0}}}{\partial y^k}\big).
\end{align}
Using (\ref{Eq;Gmu})   and   (\ref{Eq;Riccimu})   we get
 \begin{align*}
 (R^i_k)_{_{\mu F_0}}&=\frac{1}{\mu^2 F_0^2}\big(2\frac{\partial G^i_{_{F_0}}}{\partial x^k}-\frac{\partial^2 G^i_{_{F_0}}}{\partial x^j \partial y^k}y^j +2G^j\frac{\partial^2 G^i_{_{F_0}}}{\partial y^j \partial y^k} - \frac{\partial G^i_{_{F_0}}}{\partial y^j}\frac{\partial G^j_{_{F_0}}}{\partial y^k}\big)\\&=\frac{1}{\mu^2}(R^i_k)_{_ {F_0}}.
 \end{align*}
  Putting $i=k$ in this equation, we get $Ric_{_{\mu F_0}}=\frac{1}{\mu^2}Ric_{_{F_0}}$.
Therefore we have (\ref{E;1}).
Next, in order to prove (\ref{E;2}) we use (\ref{E;1}) as follows.
\begin{align*}
\tilde{\varphi_t}^{\star}((R^i_k)_{_ {F_0}})&=\tilde{\varphi_t}^{\star}\big(\frac{1}{F_0^2}(2\frac{\pa(G^i_{_{F_0}})}{\pa x^k}-\frac{\pa^2(G^i_{_{F_0}})}{\pa x^j \pa y^k}y^j\\
&+2G^j_{F_0}\frac{\pa^2(G^i_{_{F_0}})}{\pa y^j \pa y^k}-\frac{\pa(G^i_{_{F_0}})}{\pa y^j}\frac{\pa(G^i_{_{F_0}})}{\pa y^k})\big)\\
&=\tilde{\varphi_t}^{\star}(\frac{1}{F_0^2})\tilde{\varphi_t}^{\star}\big(2\frac{\pa(G^i_{_{F_0}})}{\pa x^k}-\frac{\pa^2(G^i_{_{F_0}})}{\pa x^j \pa y^k}y^j\\&+2G^j_{F_0}\frac{\pa^2(G^i_{_{F_0}})}{\pa y^j \pa y^k}-\frac{\pa(G^i_{_{F_0}})}{\pa y^j}\frac{\pa(G^i_{_{F_0}})}{\pa y^k}\big).
\end{align*}
Thus we get
\begin{align*}
\tilde{\varphi_t}^{\star}((R^i_k)_{_ {F_0}})=&\frac{1}{\tilde{\varphi_t}^{\star}(F_0^2)}\big(2\frac{\pa(\tilde{\varphi_t}^{\star}(G^i_{_{F_0}}))}{\pa \tilde x^k}-\frac{\pa^2(\tilde{\varphi_t}^{\star}(G^i_{_{F_0}}))}{\pa \tilde x^j \pa \tilde y^k} \tilde y^j\\ &+2\tilde{\varphi_t}^{\star}(G^j_{F_0})\frac{\pa^2(\tilde{\varphi_t}^{\star}(G^i_{_{F_0}}))}{\pa \tilde y^j \pa \tilde y^k}
-\frac{\pa(\tilde{\varphi_t}^{\star}(G^i_{_{F_0}}))}{\pa \tilde y^j}\frac{\pa(\tilde{\varphi_t}^{\star}(G^i_{_{F_0}}))}{\pa \tilde y^k}\big).
\end{align*}
Putting $i=k$ in this equation together with (\ref{E;ExtendedTransSpray}) implies
\begin{align*}
\tilde{\varphi_t}^{\star}(Ric_{_{F_0}})=Ric_{_{\tilde{\varphi_t}^{\star}(F_0)}}.
\end{align*}
This completes the proof of Lemma 3.
\end{proof}
Now, we are in a position to prove the  main result of this work. Let $M$ be a compact differentiable  manifold and $F_0$ an initial Finsler structure on $M$. If $(M,F_0,V)$ is a solution to the  (\ref{Eq;DefRicciSoliton}) then there is  a one-parameter family of scalars $\tau(t) $ and a one-parameter family of diffeomorphisms $\varphi_t$ on $M$ such that $(M,F(t))$ is a solution of the Ricci flow (\ref{E;Ricci}), where $F(t)$ is defined by
\begin{align}\label{E;F}
F^2(t)=\tau(t) \tilde{\varphi_t}^\star(F_0^2).
\end{align}
The converse of this assertion is also through, that is, if $(M,F(t))$ is a solution to the Finsler Ricci flow having the special form (\ref{E;F}), then there is a vector field $V$ on $M$ such that $(M,F_0,V)$ is quasi-Einstein.
In short we have the following theorem.
\begin{thm}
Let $(M,F_0)$ be a compact Finsler manifold and $(M,F_0,V)$  a quasi-Einstein space. Then there exists  a solution $(M,F(t))$ in the  form (\ref{E;F}) to the Finslerian Ricci flow.
 Conversely, if $(M,F(t))$ is a solution to the Finslerian Ricci flow having the  form (\ref{E;F}), then there is a vector field $V$ on $M$ such that $(M,F_0,V)$ is quasi-Einstein.
\end{thm}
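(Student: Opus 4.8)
The plan is to mirror the classical Riemannian argument (Perelman / Chow--Knopf) in the Finslerian setting, using the three lemmas already established, which tell us that the pullback by an extended point transformation commutes with the operations that build $Ric$ and that $Ric$ is $(-2)$-homogeneous in rescalings of $F$. I would treat the two implications separately.

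\textbf{The forward direction.} Suppose $(M,F_0,V)$ is quasi-Einstein, so (\ref{Eq;DefRicciSoliton}) holds. Let $\varphi_t$ be the one-parameter group of diffeomorphisms generated by the vector field $-V$ (or $V$, fixing signs at the end), which exists for all $t$ near $0$ — and globally for the relevant range — since $M$ is compact, and let $\tilde\varphi_t$ be its extension to $TM$ as in (\ref{E;T}). Define $F^2(t) := \tau(t)\,\tilde\varphi_t^\star(F_0^2)$ for a scalar function $\tau(t)$ to be determined, with $\tau(0)=1$. By Lemma 1 each $F(t)$ is a genuine Finsler structure. Now differentiate $\log F(t) = \frac12\log\tau(t) + \frac12\log\tilde\varphi_t^\star(F_0^2)$ in $t$. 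The first term contributes $\frac{\tau'(t)}{2\tau(t)}$; the second term, by the very definition (\ref{Df;LieDer}) of the Lie derivative along the complete lift $\hat V$, contributes $\tfrac12 \tilde\varphi_t^\star\!\big(\mathcal{L}_{\hat V} (\log F_0^2)\big)$ up to the sign coming from our choice of generator, i.e. $-\tfrac{1}{2 F_0^2}\mathcal{L}_{\hat V}(F_0^2)$ pulled back. Using (\ref{Eq;DefRicciSoliton}) to rewrite $\mathcal{L}_{\hat V}(F_0^2) = 2\lambda F_0^2 - 2 F_0^2 Ric_{F_0}$, the $t=0$ derivative becomes $\tfrac{\tau'(0)}{2} - \lambda - Ric_{F_0}$. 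For this to equal $-Ric_{F(0)} = -Ric_{F_0}$ we need $\tau'(0) = 2\lambda$, and propagating this through the one-parameter group (using Lemma 3, both the rescaling identity (\ref{E;1}) and the naturality (\ref{E;2}), so that $Ric_{F(t)} = \tfrac{1}{\tau(t)}\tilde\varphi_t^\star(Ric_{F_0})$) forces $\tau(t) = 1 + 2\lambda t$ — here one must check that the time-dependence of $\tilde\varphi_t$ inside the pullback is handled correctly, which is exactly where the complete-lift flow identity $\tilde\varphi_t = y^i \partial\varphi_t/\partial x^i$ and Lemma 3 do their work. This verifies that $F(t)$ solves (\ref{E;Ricci}).

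\textbf{The converse.} Now suppose $(M,F(t))$ solves (\ref{E;Ricci}) and has the special form (\ref{E;F}) for some given family $\tau(t)$ of scalars and $\varphi_t$ of diffeomorphisms with $\tau(0)=1$, $\varphi_0 = \mathrm{id}$. Let $V := \frac{d}{dt}\big|_{t=0}\varphi_t$, a vector field on $M$, and $\hat V$ its complete lift. Differentiating (\ref{E;F}) at $t=0$ and using (\ref{Df;LieDer}) gives $\frac{\partial}{\partial t}\big|_0 F^2(t) = \tau'(0) F_0^2 + \mathcal{L}_{\hat V}(F_0^2)$. On the other hand (\ref{E;Ricci}) in its $g_{jk}$-form, or equivalently after the "moment's thought" computation leading from (\ref{Eq;DefRicciSoliton+}) to (\ref{Eq;DefRicciSoliton}), says $\frac{\partial}{\partial t}\big|_0 F^2(t) = -2 F_0^2 Ric_{F_0}$. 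Equating and setting $\lambda := -\tfrac12\tau'(0)$ (one checks $\tau'(0)$ is forced to be constant-compatible, or simply defines $\lambda$ at $t=0$ and then shows the soliton equation is self-consistent) yields exactly (\ref{Eq;DefRicciSoliton}), hence $(M,F_0,V)$ is quasi-Einstein.

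\textbf{Main obstacle.} The routine parts are the chain-rule computations; the delicate point is the bookkeeping of the $t$-dependence when differentiating $\tilde\varphi_t^\star(F_0^2)$ — one is differentiating a pullback by a $t$-dependent map, so the result is a pullback of a Lie derivative \emph{plus} corrections, and one must be sure that on the tangent-bundle level the extended transformation $\tilde\varphi_t$ composes correctly (the semigroup property $\tilde\varphi_{s+t} = \tilde\varphi_s \circ \tilde\varphi_t$, which is not entirely obvious from (\ref{E;T}) and needs the complete-lift description). Establishing that $Ric_{F(t)}$ transforms as claimed under (\ref{E;F}) — i.e. combining (\ref{E;1}) and (\ref{E;2}) of Lemma 3 into $Ric_{F(t)} = \tau(t)^{-1}\,\tilde\varphi_t^\star(Ric_{F_0})$ for all $t$, not just at $t=0$ — is the one step where all three lemmas are genuinely needed at once, and it is the crux of making the whole equivalence rigorous rather than just infinitesimal.
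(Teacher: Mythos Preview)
Your converse direction is essentially the paper's argument. The forward direction, however, has a genuine gap: you take $\varphi_t$ to be the \emph{autonomous} flow of $\pm V$ and hope to choose $\tau(t)$ afterwards. Carry the computation out at a general time $t$, not just $t=0$. With generator $V$ one gets
\[
\frac{\partial}{\partial t}\log F(t)
=\frac{\tau'(t)}{2\tau(t)}+\frac{1}{2}\,\frac{\tilde\varphi_t^\star(\mathcal{L}_{\hat V}F_0^2)}{\tilde\varphi_t^\star(F_0^2)}
=\frac{\tau'(t)}{2\tau(t)}+\lambda-\tilde\varphi_t^\star(Ric_{F_0}),
\]
while the Ricci flow equation together with Lemma~3 demands that this equal $-\tfrac{1}{\tau(t)}\tilde\varphi_t^\star(Ric_{F_0})$. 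Equating forces
\[
\frac{\tau'(t)}{2\tau(t)}+\lambda=\Big(1-\frac{1}{\tau(t)}\Big)\,\tilde\varphi_t^\star(Ric_{F_0}),
\]
and the left side depends only on $t$ whereas the right side depends on $(x,y)$. Unless $Ric_{F_0}$ is constant or $\tau\equiv 1$ (the steady case $\lambda=0$), this is impossible; so for $\lambda\neq 0$ no choice of $\tau$ rescues the autonomous flow, and your ``propagation'' step fails.

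The paper's fix is to generate $\varphi_t$ from the \emph{time-dependent} family $X_t:=\tfrac{1}{\tau(t)}V$ with $\tau(t)=1-2\lambda t$. Then
\[
\frac{d}{dt}\tilde\varphi_t^\star(F_0^2)=\tilde\varphi_t^\star(\mathcal{L}_{\hat X_t}F_0^2)=\frac{1}{\tau(t)}\,\tilde\varphi_t^\star(\mathcal{L}_{\hat V}F_0^2),
\]
and the extra factor $1/\tau(t)$ is precisely what makes the $\lambda$-terms cancel, leaving $-\tfrac{1}{\tau(t)}\tilde\varphi_t^\star(Ric_{F_0})=-Ric_{F(t)}$ for every $t$. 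Note that $\{\varphi_t\}$ is then \emph{not} a one-parameter group, so the semigroup identity you flag as the ``main obstacle'' is not the relevant issue; the crux is choosing the right time-dependent generator so that the scalar and diffeomorphism pieces of the ansatz stay synchronized. Compactness of $M$ is used exactly here, to guarantee that the non-autonomous flow $\varphi_t$ exists on the whole interval where $\tau(t)>0$.
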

\begin{proof}
Suppose that $(M,F_0,V)$ satisfies (\ref{Eq;DefRicciSoliton}) and consider a family of scalars $\tau(t)$ defined by  $$\tau(t):= 1-2\lambda t >0,$$ where $\lambda$ is a constant.
Next define a one-parameter family of vector fields $X_{_t}$ on $M$ by
$$X_{_{t}}(x):= \frac{1}{\tau(t)} V(x).$$
 Let $\varphi_t$ denote the diffeomorphisms generated by the family $X_{_t}$, where $\varphi_{_0}=Id_M$, and define a smooth one-parameter family of Finsler structures on $M$ by $$F^2(t)=\tau(t) \tilde{\varphi_t}^\star(F_0^2).$$
Thus we have
$$\log(F(t))=\frac{1}{2} \log(\tau(t) \tilde{\varphi_t}^\star(F_0^2)).$$
Using (\ref{Df;LieDer}) we have
  $\frac{d}{dt}\tilde{\varphi_t}^{\star}( F_0^2)=\tilde{\varphi_t}^\star(\mathcal{L}_{\hat{X_{_t}}} F_0^2)$
    from which we get
\begin{align*}
\frac{\partial }{\partial t}\log(F(t))&=\frac{1}{2} \frac{\tau^\prime(t) \tilde{\varphi_t}^\star(F_0^2)+\tau(t) \tilde{\varphi_t}^\star(\mathcal{L}_{\hat{X_{_t}}} F_0^2)}{\tau(t)\tilde{\varphi_t}^\star(F_0^2)}
\\&=\frac{1}{2} \frac{\tilde{\varphi_t}^\star(-2\lambda F_0^2+\mathcal{L}_{\hat{V}} F_0^2)}{\tau(t)\tilde{\varphi_t}^\star(F_0^2)}\\
&=\frac{1}{2} \frac{\tilde{\varphi_t}^\star(-2F_0^2 Ric_{_{F_0}})}{\tau(t)\tilde{\varphi_t}^\star(F_0^2)}
\\&=\frac{1}{2} \frac{(-2)\tilde{\varphi_t}^\star(F_0^2) \tilde{\varphi_t}^\star(Ric_{_{F_0}}) }{\tau(t)\tilde{\varphi_t}^\star(F_0^2)}\\
&=\frac{-Ric_{_{\tilde{\varphi_t}^\star(F_0)}}}{\tau(t)}
=-Ric_{(\tau(t))^{\frac{_1}{_2}} \tilde{\varphi_t}^\star(F_0)}\\&=-Ric_{_{F(t)}}.
\end{align*}
Therefore, $F(t)$ is a solution to the Ricci flow (\ref{E;Ricci}). Conversely, suppose that $(M,F(t))$ is a solution to the Ricci flow equation (\ref{E;Ricci}) having the form (\ref{E;F}).
We may assume without loss of generality that $\tau(0)=1$ and $\varphi_0=Id_M$, then we have
\begin{align}\label{Eq;RicFzero}
-Ric_{_{F_0}}&=\frac{\partial}{\partial t} (\log F(t))\vert_{t=0}\nn \\&=
\frac{\partial}{\partial t}(\frac{1}{2} \log(\tau(t)\tilde{\varphi_t}^\star F_0^2))\vert_{t=0}
\nn \\&=\frac{1}{2} \frac{\tau^\prime(t) \tilde{\varphi_t}^\star F_0^2+\tau(t) \tilde{\varphi_t}^\star(\mathcal{L}_{\widehat{X}(t)} F_0^2)}{\tau(t)\tilde{\varphi_t}^\star(F_0^2)}\vert_{t=0}\nn \\&=\frac{1}{2} \frac{\tau^\prime(0) F_0^2+\mathcal{L}_{\widehat{X}(0)} F_0^2}{F_0^2},
\end{align}
where $X(t)$ is the family of vector fields generating the diffeomorphism $\varphi_t$.
Thus, (\ref{Eq;RicFzero}) implies
 $$-2F_0^2 Ric_{_{F_0}}=\tau^\prime(0) F_0^2+\mathcal{L}_{\widehat{X}(0)} F_0^2.$$
Replacing $\lambda=\frac{-1}{2} \tau^\prime(0)$ and $V=X(0)$ we have the equation (\ref{Eq;DefRicciSoliton}).
This completes the proof of Theorem.
\end{proof}

\vskip 1.4 true cm

\begin{center}{\textbf{Acknowledgments}}
\end{center}
The first  author was supported in part by an INSF grant (89000676). \\ \\
\vskip 0.4 true cm



\bigskip
\bigskip


{\footnotesize \pn{\bf Behroz Bidabad }\; \\ {Faculty of
Mathematics and Computer Science}, {Amirkabir University of Technology, 15914, } {Tehran, Iran.}\\
{\tt Email: bidabad@aut.ac.ir}\\

{\footnotesize \pn{\bf  Mohamad Yarahmadi}\; \\ {Faculty of
Mathematics and Computer Science}, {Amirkabir University of Technology, 15914, } {Tehran, Iran.}\\
{\tt Email: m.yarahmadi@aut.ac.ir}\\
\end{document}